\author{Elif Uyan\i k}
\address{Elif Uyan\i k\\
Department of Mathematics\\
Middle East Technical University\\
06800 Ankara\\
Turkey}
\email{euyanik@metu.edu.tr}
\author{Murat H. Yurdakul}
\address{Murat H. Yurdakul\\
Department of Mathematics\\
Middle East Technical University\\
06800 Ankara\\
Turkey}
\email{myur@metu.edu.tr}
\title[]{Bounded Operators to $\ell$-K\"{o}the Spaces}
\subjclass[2010]{46A03, 46A45, 46A32, 46A04}
\keywords{bounded operators, bounded factorization property, $\ell$-K\"{o}the spaces}
\thanks{This research was partially supported by Turkish Scientific and Technological Research Council.}
\numberwithin{equation}{section}
\theoremstyle{thmit} 
\newtheorem{theorem}{Theorem}[section]
\newtheorem{lemma}[theorem]{Lemma}
\begin{document}

\maketitle

\begin{abstract}
    For Fr{\'e}chet spaces $E$ and $F$ we write $(E,F)\in \mathcal{B}$ if every continuous linear operator from $E$ to $F$ is bounded. Let $\ell$ be a Banach sequence space with a monotone norm in which the canonical system $(e_{n})$ is an unconditional basis. We obtain a necessary and sufficient condition for $(E,F) \in \mathcal{B}$ when $F = \lambda^{\ell}(B).$ We say that a triple $(E,F,G)$ has the bounded factorization property and write $(E,F,G) \in \mathcal{BF}$ if each continuous linear operator $T : E \longrightarrow G$ that factors over $F$ is bounded. We extend some results in \cite{Ter03} to $\ell$-K\"{o}the spaces and obtain a sufficient condition for $(E,\lambda^{\ell_1}(A) \hat{\otimes}_{\pi}  \lambda^{\ell_2}(B), \lambda^{\ell_3}(C)) \in \mathcal{BF}$ when $\lambda^{\ell_1}(A)$ and $\lambda^{\ell_2}(B)$ are nuclear.   
\end{abstract}
	
\dedicatory{Dedicated to the memory of Prof. Dr. Tosun Terzio{\~g}lu}

\section{Introduction}
     For an infinite matrix $A= (a_{n}^{k})$ with $0 \leq a_{n}^{k} \leq a_{n}^{k+1}$ and $\displaystyle \sup_{k} a_{n}^{k} > 0$ for every $n$ and $k$ we denote by $\lambda(A)$ the corresponding $l_{1}$-K\"{o}the space, that is,
		$$ 
		\lambda(A) = \left\{x = (x_{n}) : \left\|x\right\|_{k} = \sum_{n} \left|x_{n}\right| a_{n}^{k} < \infty, \quad \forall k \in \mathbb{N}\right\}
		$$
		Equipped with the system of seminorms $\left\{\left\|.\right\|_k, k \in \mathbb{N}\right\}$, $\lambda(A)$ is a Fr{\'e}chet space.
		
		Following \cite{Dra83}, we denote by $\ell$ a Banach sequence space in which the canonical system $(e_{n})$ is an unconditional basis. The norm $\left\|.\right\|$ is called monotone if $\left\|x\right\| \leq \left\|y\right\|$ whenever $\left|x_{n}\right| \leq \left|y_{n}\right|$, $x=(x_{n})$, $y=(y_{n}) \in \ell$, $n \in \mathbb{N}$. Let $\Lambda$ be the class of such spaces with monotone norm. In particular, $l_{p} \in \Lambda$ and $c_{0} \in \Lambda$. It is known that every Banach space with an unconditional basis has a monotone norm which is equivalent to its original norm. Indeed, it is enough to put
		$$ 
		\left\|x\right\| = \sup_{\left|\beta_{n}\right| \leq 1} \left|\sum_{n} e_{n}{'}({x}) \beta_{n} e_{n}\right|
		$$
		where $\left|.\right|$ denotes the original norm, $(e_{n}{'})$ denote the sequence of coefficient functionals.
		
		Let $\ell \in \Lambda$ and $\left\|.\right\|$ be a monotone norm in $\ell$. If $A = (a_{n}^{k})$ is a K\"{o}the matrix, the $\ell$-K\"{o}the space $\lambda^{\ell}(A)$ is the space of all sequences of scalars $(x_{n})$ such that $(x_{n}a_{n}^{k}) \in \ell$ with the topology generated by the seminorms
		$$
		\left\|(x_{n})\right\|_{k} = \left\|(x_{n} a_{n}^{k})\right\|
		$$
		Let us remind that $\left\|(e_{n})\right\|_{k} = a_{n}^{k}.$
    We denote by $\mathcal{L}(E,F)$ the space of all continuous linear operators between Fr{\'e}chet spaces $E$ and $F.$ For $T \in \mathcal{L}(E,F)$ we consider the following operator seminorms
		$$
		\left|T\right|_{p,q} = \sup \left\{ \left|Tx\right|_{p} : \left|x\right|_{q} \leq 1\right\}, \quad p,q \in \mathbb{N}
		$$
		which may take the value $+\infty$. In particular, for any one dimensional operator $ T = u \otimes x$ which sends each $z \in E$ to $u(z)x \in F$, we have
		$$                                       
		\left|T\right|_{p,q} = \left|u\right|_{q}^{*} \left|x\right|_{p}
		$$
where ${\left\|u\right\|}_{q}^{*} = \sup \left\{ \left|u(x)\right| : \left\|x\right\|_{q} \leq 1 \right\}$.

	  We recall that $T : E \longrightarrow F$ is continuous if there is a map $N: \mathbb{N} \longrightarrow \mathbb{N}$ such that
$$
{\left\| T \right\|}_{k,N(k)} < \infty, \quad \forall k \in \mathbb{N};
$$	
$T$ is bounded if $\exists N \in \mathbb{N}$ such that 
$$
		\displaystyle {\left\| T \right\|}_{r,N} < \infty, \quad \forall r \in \mathbb{N}.
$$
    
		We write $(E,F) \in \mathcal{B}$ if every continuous linear operator from $E$ to $F$ is bounded. For Fr{\'e}chet spaces $E$ and $F$, in \cite{Vog83}, Vogt proved that $(E,F) \in \mathcal{B}$ if and only if for every sequence $N(k)$, $\exists N \in \mathbb{N}$ such that $\forall r \in \mathbb{N}$ we have $k_{0} \in \mathbb{N}$ and $C>0$ with
\begin{eqnarray}
\displaystyle {\left\| T \right\|}_{r,N} \leq C \max_{1 \leq k \leq k_{0}} {\left\| T \right\|}_{k,N(k)}
\label{eqn}
\end{eqnarray}
for all $T \in \mathcal{L}(E,F)$.

    We say that a triple $(E,F,G)$ has the bounded factorization property and write $(E,F,G) \in \mathcal{BF}$ if each continuous linear operator $T : E \longrightarrow G$ that factors over $F$ is bounded. In \cite{Ter03}, the property $\mathcal{BF}$ is characterized not only for triples of K\"{o}the spaces but also for the general case of Fr{\'e}chet spaces. Our aim here to extend some results in \cite{Ter03} to $\ell$-K\"{o}the space case. 
		
\section{BOUNDED OPERATORS TO $\ell$-KOTHE SPACES}
	
    If we follow the steps of Crone and Robinson Theorem \cite{Cro75}, we obtain the following.\\
\begin{lemma} \label{lem} $T \in \mathcal{L}(\lambda(A),{\lambda}^{\ell}(B))$ iff $\forall m$, $\exists k$ such that
$$
\sup_{n} \frac{{\left\|Te_{n}\right\|}_{m}}{{\left\|e_{n}\right\|}_{k}} < +\infty
$$
\end{lemma}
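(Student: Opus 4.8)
The plan is to exploit the fact that the canonical system $(e_n)$ is an \emph{absolute} basis of $\lambda(A)$: every $x=(x_n)\in\lambda(A)$ has the expansion $x=\sum_n x_n e_n$ with $\|x\|_k=\sum_n |x_n| a_n^k$ for each $k$, the series converging in $\lambda(A)$. Since $\lambda^{\ell}(B)$ is a Fr\'echet space on which each $\|\cdot\|_m$ is a genuine seminorm (and in fact monotone, by the monotonicity of the norm of $\ell$), it is this absolute structure on the domain that turns the continuity estimate into a pointwise bound on the basis vectors. I would therefore prove the two implications separately, keeping the $\ell_1$-nature of the domain in the foreground.

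For necessity I would simply test continuity against the basis. If $T\in\mathcal{L}(\lambda(A),\lambda^{\ell}(B))$, then by the continuity criterion recalled in the introduction, for a fixed $m$ there are $k$ and $C>0$ with $\|Tx\|_m\le C\|x\|_k$ for all $x$. Substituting $x=e_n$ yields $\|Te_n\|_m\le C\|e_n\|_k=C a_n^k$, hence $\sup_n \|Te_n\|_m/\|e_n\|_k\le C<\infty$ (reading the quotient as $0$ when $a_n^k=0$, a case that forces $\|Te_n\|_m=0$ as well). As $m$ was arbitrary, the stated condition follows.

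For sufficiency, fix $m$ and choose $k$ and $C_m>0$ so that $\|Te_n\|_m\le C_m a_n^k$ for all $n$. For $x=\sum_n x_n e_n\in\lambda(A)$ the partial sums $S_N=\sum_{n\le N} x_n Te_n$ satisfy, by the triangle inequality for $\|\cdot\|_m$,
$$
\|S_N-S_M\|_m\le \sum_{M<n\le N}|x_n|\,\|Te_n\|_m\le C_m\sum_{M<n\le N}|x_n|\,a_n^k ,
$$
whose right-hand side tends to $0$ because $\sum_n |x_n| a_n^k=\|x\|_k<\infty$. Holding for every $m$, this makes $(S_N)$ a Cauchy sequence in the complete space $\lambda^{\ell}(B)$, so $\sum_n x_n Te_n$ converges and, through the expansion $x=\sum_n x_n e_n$, equals $Tx$. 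Letting $N\to\infty$ in the same estimate gives $\|Tx\|_m\le C_m\|x\|_k$, which is exactly the continuity of $T$.

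The step I expect to require the most care is this sufficiency direction: one must justify that the norm-convergent series $\sum_n x_n Te_n$ genuinely represents $Tx$ in the $\ell$-K\"othe topology (for a prescribed matrix of values one instead \emph{defines} the operator by this series and checks it agrees with the data), and that the term-by-term bound passes to the infinite sum. This is where completeness of $\lambda^{\ell}(B)$, the monotonicity of the norm of $\ell$, and the \emph{absolute} ($\ell_1$-type) character of the domain seminorms $\|\cdot\|_k$ all enter; it is precisely the $\ell_1$-structure of $\lambda(A)$ that collapses the criterion to a plain $\sup_n$ over the ratios rather than an $\ell$-type condition, and one must also dispose of the degenerate indices where some $a_n^k$ vanish.
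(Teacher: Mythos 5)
Your proof follows essentially the same route as the paper: necessity by testing the continuity estimate on the basis vectors $e_n$, and sufficiency by expanding $x=\sum_n x_n e_n$ and using the $\ell_1$-type structure of the domain seminorms to get $\|Tx\|_m \le \sum_n |x_n|\,\|Te_n\|_m \le \sup_n\bigl(\|Te_n\|_m/a_n^k\bigr)\,\|x\|_k$. If anything you are more careful than the paper, which simply writes $\|Tx\|_m = \bigl\|\sum_n x_n Te_n\bigr\|_m$ and applies the countable triangle inequality without comment, whereas you verify convergence of the partial sums in the complete space $\lambda^{\ell}(B)$ and explicitly flag the identification of that series with $Tx$ (and the degenerate indices with $a_n^k=0$) as the delicate points.
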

\begin{proof}
$T \in \mathcal{L}(\lambda(A), \lambda^{\ell}(B))$ iff $\forall m$, $\exists k$ such that 
$$\sup_{x \neq 0, x \in \lambda(A)} \frac{{\left\|Tx\right\|}_{m}}{{\left\|x\right\|}_{k}} < +\infty$$ 
For $x=e_{n}$, we obtain the result.\\
Conversely, suppose that $\forall m$, $\exists k$ such that 
$$\sup_{n} \frac{{\left\|Te_{n}\right\|}_{m}}{{\left\|e_{n}\right\|}_{k}} < +\infty$$
Let $x \in \lambda(A)$.\\
\begin{eqnarray*}
{\left\|Tx\right\|}_{m} & = & { \left\|\sum_{n} x_{n} Te_{n}\right\|}_{m} \leq \sum_{n} \left|x_{n}\right| \displaystyle \frac{{\left\|Te_{n}\right\|}_{m}}{{\left\|e_{n}\right\|}_{k}} {\left\|e_{n}\right\|}_{k} \\
& \leq & \sup_{n} \frac{{\left\|Te_{n}\right\|}_{m}}{{\left\|e_{n}\right\|}_{k}} \displaystyle \sum_{n} \left|x_{n}\right| a_{n}^{k} \leq \sup_{n} \frac{{\left\|Te_{n}\right\|}_{m}}{{\left\|e_{n}\right\|}_{k}} {\left\|x\right\|}_{k}
\end{eqnarray*}
So, $T \in \mathcal{L}(\lambda(A), \lambda^{\ell}(B))$.
\end{proof}
Notice that when domain is $\ell$-K\"{o}the space, we can not use this argument.

     Our first result is the following.
\begin{theorem}\label{thm1} The following are equivalent:\\
i) $(\lambda(A),{\lambda}^{\ell}(B)) \in \mathcal{B}$\\
ii) for every sequence $N(k)$, there is $N \in \mathbb{N}$ such that for each $r \in \mathbb{N}$ we have $k_{0} \in \mathbb{N}$ and $C > 0$ with
$$\frac{b_{v}^{r}}{a_{i}^{N}} \leq C \max_{1 \leq k \leq k_{0}} \frac{b_{v}^{k}}{a_{i}^{N(k)}}$$
for all $v \in \mathbb{N}$,$i \in \mathbb{N}.$
\end{theorem}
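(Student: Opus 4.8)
The plan is to prove the equivalence by connecting Vogt's abstract characterization in \eqref{eqn} with the concrete matrix condition, using Lemma~\ref{lem} to translate operator seminorms into explicit coefficient data. The key observation is that for $E = \lambda(A)$ and $F = \lambda^{\ell}(B)$, the rank-one operators $T = e_i' \otimes e_v$ (sending $x \mapsto x_i\, e_v$) satisfy, by the formula for one-dimensional operators recalled in the introduction, $\left|T\right|_{p,q} = \left|e_i'\right|_q^* \left|e_v\right|_p = \frac{b_v^p}{a_i^q}$, since $\left\|e_v\right\|_p = b_v^p$ in $\lambda^\ell(B)$ and $\left\|e_i'\right\|_q^* = 1/a_i^q$ in the dual of $\lambda(A)$. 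Thus the matrix quotients $b_v^r / a_i^N$ appearing in (ii) are precisely the operator seminorms $\left|T\right|_{r,N}$ evaluated on this distinguished family of rank-one maps.

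First I would record Vogt's criterion \eqref{eqn}: $(\lambda(A),\lambda^{\ell}(B)) \in \mathcal{B}$ iff for every sequence $N(k)$ there is $N$ such that for each $r$ there exist $k_0$ and $C>0$ with $\left\|T\right\|_{r,N} \leq C \max_{1 \leq k \leq k_0} \left\|T\right\|_{k,N(k)}$ for all $T \in \mathcal{L}(\lambda(A),\lambda^{\ell}(B))$. The direction (i) $\Rightarrow$ (ii) is then immediate: specialize the universally quantified inequality to the rank-one operators $T = e_i' \otimes e_v$ and substitute the seminorm values computed above to obtain exactly the inequality in (ii). The content lies in the reverse direction.

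For (ii) $\Rightarrow$ (i), I would start from the scalar inequality in (ii) and bootstrap it to an arbitrary $T \in \mathcal{L}(\lambda(A),\lambda^{\ell}(B))$, thereby verifying \eqref{eqn}. The mechanism is to estimate $\left\|T\right\|_{r,N}$ by decomposing $Tx$ coordinatewise. Writing $Tx = \sum_n x_n\, Te_n$ and expanding $Te_n$ in the canonical basis of $\lambda^\ell(B)$, one controls $\left\|Te_n\right\|_m$ in terms of $\left\|e_n\right\|_k$ via Lemma~\ref{lem}, then applies (ii) entrywise to the ratios $b_v^r / a_i^N$. The monotonicity of the norm on $\ell$ and the unconditionality of $(e_n)$ are exactly what let one pass from the scalar estimate on individual matrix entries to the operator norm bound; this is the step where the hypotheses on $\ell$ genuinely enter, replacing the $\ell_1$-summation used in the classical Köthe case.

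The main obstacle I anticipate is the reverse implication, specifically making the passage from the entrywise scalar inequality to a uniform operator estimate rigorous while respecting that the $\max_{1 \leq k \leq k_0}$ on the right of (ii) is taken inside, before summing or norming over the basis. One must ensure the constant $C$, the index $N$, and the cutoff $k_0$ can be chosen uniformly in $T$ and not merely pointwise in the basis vectors; the finiteness of the range $1 \leq k \leq k_0$ is what makes this uniformization possible, since it bounds the number of seminorms $\left\|T\right\|_{k,N(k)}$ that need simultaneous control. I would handle this by carefully interchanging the supremum over $x$ with the basis decomposition and invoking the monotone, unconditional structure to dominate the full operator norm by the finitely many seminorms appearing on the right-hand side of \eqref{eqn}.
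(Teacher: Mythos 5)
Your proposal is correct and takes essentially the same route as the paper: necessity by evaluating Vogt's criterion \eqref{eqn} on the rank-one operators $e_{i}{'} \otimes e_{v}$ (with $\left\|e_{i}{'} \otimes e_{v}\right\|_{p,q} = b_{v}^{p}/a_{i}^{q}$), and sufficiency by the coefficient-wise estimate based on Lemma \ref{lem}, where monotonicity of the norm, unconditionality, and the bound $\max_{1 \leq k \leq k_{0}} \leq \sum_{k=1}^{k_{0}}$ turn the scalar inequality of (ii) into the operator estimate. The only cosmetic difference is that you phrase the converse as verifying \eqref{eqn} uniformly over all $T$, while the paper fixes a continuous $T$, extracts its sequence $N(k)$ from Lemma \ref{lem}, and concludes boundedness of that $T$ directly; the underlying computation is identical.
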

\begin{proof} 
Suppose that $(\lambda(A),{\lambda}^{\ell}(B)) \in \mathcal{B}$. Consider $S: \lambda(A) \longrightarrow \lambda^{\ell}(B)$ with $S = e_{i}{'} \otimes e_{v}$ where $e_{i}{'}(x) = x_{i}$ for all $x \in \lambda(A).$ Since $S$ is the operator of rank one, we note that
$$
{\left\|S\right\|}_{k,N(k)} = {\|e_{i}^{'}\|}_{N(k)} {\|e_{v}\|}_{k} = \frac{b_{v}^{k}}{a_{i}^{N(k)}}
$$
Similarly, ${\left\|A\right\|}_{r,N} = \frac{b_{v}^{r}}{a_{i}^{N}}$. So, the result follows from \eqref{eqn}.
    
		For the converse, let $\displaystyle Te_{i} = \sum_{v=1}^{\infty} u_{vi}e_{v}$. Since $T$ is continuous, by Lemma \ref{lem}, there is $N(k)$ such that
\begin{eqnarray*}
\left\|T\right\|_{k,N(k)} & = & \sup_{i \in \mathbb{N}} \frac{{\left\|Te_{i}\right\|}_{k}}{{\left\|e_{i}\right\|}_{N(k)}} \\
 & = &\sup_{i \in \mathbb{N}} \sup_{\left| \beta_{v} \right| \leq 1} \left|\sum_{v=1}^{\infty} u_{vi} \beta_{v} \frac{b_{v}^{k}}{a_{i}^{N(k)}}e_{v} \right| < \infty 
\end{eqnarray*}
     
		So we find $N \in \mathbb{N}$ such that
\begin{eqnarray*}
\left\|T\right\|_{r,N} & \leq & \sup_{i \in \mathbb{N}} \left\{\sup_{\left| \beta_{v} \right| \leq 1} \left| \sum_{v=1}^{\infty} u_{vi} \beta_{v} \frac{b_{v}^{r}}{a_{i}^{N}} e_{v} \right|\right\} \\
 & \leq & \sup_{i \in \mathbb{N}} \left\{\sup_{\left| \beta_{v} \right| \leq 1} \left|\sum_{v=1}^{\infty} u_{vi} \beta_{v} \left( C \max_{1 \leq k \leq k_{0}} \frac{b_{v}^{k}}{a_{i}^{N(k)}} \right) e_{v} \right|\right\} \\
 & \leq & C \sum_{k=1}^{k_{0}} \sup_{i \in \mathbb{N}} \left\{\sup_{\left| \beta_{v} \right| \leq 1} \left|\sum_{v=1}^{\infty} u_{vi} \beta_{v} \frac{b_{v}^{k}}{a_{i}^{N(k)}} e_{v} \right|\right\}< \infty
\end{eqnarray*}
Therefore, $T$ is bounded.
\end{proof}
Now, consider the $\ell$-K\"{o}the space $\lambda^{\ell}(A)$ and any Fr{\'e}chet space $E$. Then, we obtain the following.
\begin{theorem}\label{thm2} The following are equivalent:\\
i) $(E,\lambda^{\ell}(A)) \in \mathcal{B}$\\
ii) for every sequence $N(k)$, there is $N \in \mathbb{N}$ such that for each $r \in \mathbb{N}$ we have $k_{0} \in \mathbb{N}$ and $C>0$ with
$$a_{v}^{r} {\left\|u\right\|}_{N}^{*} \leq C \max_{1 \leq k \leq k_{0}} a_{v}^{k} \left\|u\right\|_{N(k)}^{*}$$
for all $v \in \mathbb{N}$, $u \in E{'}$.
\end{theorem}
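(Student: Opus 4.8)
The plan is to run the same rank-one reduction as in Theorem \ref{thm1}, but now exploiting the basis on the range side, and to invoke Vogt's criterion \eqref{eqn}. For the implication (i)$\Rightarrow$(ii) I would test \eqref{eqn} on one dimensional operators attached to a single basis vector of the range. Given $u \in E'$ and $v \in \mathbb{N}$, set $S = u \otimes e_v \colon E \to \lambda^{\ell}(A)$, so $Sz = u(z) e_v$; since $u$ is continuous we have $\|u\|_q^* < \infty$ for some $q$, whence $\|S\|_{p,q} = \|u\|_q^*\, a_v^p < \infty$ for all $p$ and $S \in \mathcal{L}(E,\lambda^{\ell}(A))$. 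By the rank-one formula recalled in the introduction, $\|S\|_{r,N} = a_v^r \|u\|_N^*$ and $\|S\|_{k,N(k)} = a_v^k \|u\|_{N(k)}^*$, so feeding $S$ into \eqref{eqn} yields precisely the inequality in (ii). As $u$ and $v$ were arbitrary, (ii) follows.

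For the converse (ii)$\Rightarrow$(i) I would fix $T \in \mathcal{L}(E,\lambda^{\ell}(A))$ and encode it by its coordinate functionals $u_v := T' e_v' \in E'$, i.e. $Tz = \sum_v u_v(z) e_v$, so that $\|Tz\|_m = \|(a_v^m u_v(z))_v\|$. Continuity of $T$ furnishes a sequence $N(k)$ with $\|T\|_{k,N(k)} < \infty$ for all $k$; this is the sequence to insert into (ii), producing $N$ and, for each $r$, the data $k_0, C$. In parallel with Theorem \ref{thm1}, the natural first attempt is to write the monotone norm as $\|(c_v)_v\| = \sup_{|\beta_v| \le 1} \|\sum_v c_v \beta_v e_v\|$, apply (ii) to each $u = u_v$ to get $a_v^r \|u_v\|_N^* \le C \max_{1 \le k \le k_0} a_v^k \|u_v\|_{N(k)}^*$, carry this estimate inside the norm, and split the finite maximum into a sum over $k$ by partitioning the indices $v$ according to which $k$ realizes the maximum and using monotonicity, hoping to bound $\|T\|_{r,N}$ by $C \sum_{k=1}^{k_0} \|T\|_{k,N(k)} < \infty$.

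The main obstacle is exactly this recombination step. Condition (ii) controls the dual norms $\|u_v\|_N^*$, which are suprema over $z$, whereas $\|Tz\|_r$ involves the correlated values $u_v(z)$ for one fixed $z$; bounding $|u_v(z)| \le \|u_v\|_N^*$ coordinatewise is too lossy, since $\|(a_v^k \|u_v\|_{N(k)}^*)_v\|$ need not be finite even when $\|T\|_{k,N(k)}$ is (already for $E = \lambda^{\ell_1}$-type domains and the identity map one sees the coordinatewise sum diverge while the operator norm stays bounded). To retain the correlation I would instead pass to the operator-norm identity
\begin{eqnarray*}
\|T\|_{p,q} = \sup_{\|\delta\|_{\ell^{\times}} \le 1} \Big\| \sum_v a_v^p \delta_v u_v \Big\|_q^*,
\end{eqnarray*}
valid by $\ell$--$\ell^{\times}$ duality together with the monotone unconditional structure of $\ell$, where $\ell^{\times}$ is the K\"othe dual with its dual norm. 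The task then reduces to dominating $\|\sum_v a_v^r \delta_v u_v\|_N^*$ by $\max_{1 \le k \le k_0}\|\sum_v a_v^k \delta_v u_v\|_{N(k)}^*$ using (ii), and the delicate point is to lift the per-coordinate domination supplied by (ii) to the combined functionals without destroying the cancellation that keeps these dual norms finite. Making this lifting rigorous, rather than resorting to the naive coordinatewise bound, is where the argument genuinely has to work, and is the step I would spend the most care on.
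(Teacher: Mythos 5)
Your necessity argument is correct and coincides with the paper's: test \eqref{eqn} on the rank-one maps $u \otimes e_v$, whose operator seminorms are $a_v^p \|u\|_q^*$. The problem is the converse, which you do not actually prove. After encoding $T$ by the functionals $u_v = e_v' \circ T$ and writing the (correct) duality formula $\|T\|_{p,q} = \sup_{\|\delta\|_{\ell^{\times}} \le 1} \|\sum_v a_v^p \delta_v u_v\|_q^*$, you reduce everything to dominating $\|\sum_v a_v^r \delta_v u_v\|_N^*$ by $\max_{1 \le k \le k_0} \|\sum_v a_v^k \delta_v u_v\|_{N(k)}^*$, and you then say that making this "lifting" rigorous is the step needing the most care. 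But that step \emph{is} the entire content of (ii)$\Rightarrow$(i): condition (ii) speaks about one functional $u$ and one index $v$ at a time, whereas the combined functionals $\sum_v a_v^p \delta_v u_v$ mix all coordinates, and nothing in your sketch passes from the former to the latter. So what you have is a proof of (i)$\Rightarrow$(ii) plus a correct reformulation of what remains; the sufficiency half is announced, not established. By the standard of a complete proof, this is a genuine gap.

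That said, your diagnosis of the obstruction is accurate, and it points exactly at the step where the paper's own argument is unjustified. The paper follows the "naive" route you reject: it bounds $|u_v(x)| \le \|u_v\|_N^* \|x\|_N$, applies (ii) coordinatewise, replaces the max by a sum over $k \le k_0$, and then passes from $\|(a_v^k \|u_v\|_{N(k)}^*)_v\|_{\ell}$ to $\sup_{\|x\|_{N(k)} \le 1} \|(a_v^k u_v(x))_v\|_{\ell} = \|T\|_{k,N(k)} = M(k)$, i.e.\ it pulls the coordinatewise supremum defining $\|u_v\|_{N(k)}^*$ outside the $\ell$-norm. By monotonicity that inequality in fact goes the other way, $\|(a_v^k \|u_v\|_{N(k)}^*)_v\|_{\ell} \ge \|T\|_{k,N(k)}$, and it can be strict, even infinite against finite: your own example ($u_v = e_v'$, $a_v^k \equiv 1$, $\ell = \ell_1$) gives $\sum_v a_v^k \|u_v\|^* = \infty$ while $\|T\|_{k,N(k)} = 1$. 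The interchange is harmless when $\ell$ carries a sup-type norm such as $c_0$, since then the two iterated suprema commute, but not for general $\ell \in \Lambda$. So your proposal is incomplete, but the step you refused to fudge is genuinely problematic: closing it for general $\ell$ (say by a Grothendieck-type summing argument for particular domains, or by restricting the class of $\ell$) requires an idea that appears in neither your sketch nor the paper.
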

\begin{proof} 
Suppose that $(E,\lambda^{\ell}(A)) \in \mathcal{B}$. Similar to the proof of Theorem \ref{thm1}, consider the operator of rank one $A=y \otimes e_{v}$ where $y \in E{'}$. The result follows from \eqref{eqn}.

     For the converse, let $T: E \longrightarrow \lambda^{\ell}(A)$ be continuous linear operator. Let
$$
Tx = \sum_{v=1}^{\infty} e_{v}{'}(Tx) e_{v} = (e_{v}{'}T(x)) = (u_{v}(x)), \quad x \in E
$$
where $ u_{v} = e_{v}{'} \circ T.$

     Then, by continuity we find $N(k)$ such that
$$
\sup_{{\left\|x\right\|}_{N(k)}\leq1} \left( \sup_{\left|\beta_{v}\right| \leq 1 } \left| \sum_{v} \beta_{v} {u_{v}(x)} a_{v}^{k} e_{v} \right|\right) = M(k)
$$
      
			Let $\left|u_{v}(x)\right| = \theta_{v} u_{v}(x)$ where $\theta_{v} = \pm 1$ and $\beta_{v} \theta_{v} = \alpha_{v}$, note that 
\begin{eqnarray*}
{\left\|Tx\right\|}_{r} & = & \sup_{\left|\beta_{v}\right| \leq 1} \left| \sum_{v} \beta_{v} u_{v}(x) a_{v}^{r} e_{v} \right| \\
  & \leq &  \sup_{\left| \beta_{v} \right| \leq 1 } \left| \sum_{v} \beta_{v} \left(\frac{\left|u_{v}(x)\right|}{\left\|x\right\|_{N}}\right) a_{v}^{r} e_{v} \right| {\left\|x\right\|_{N}} \\
  & \leq &  \sup_{\left| \beta_{v} \right| \leq 1 } \left| \sum_{v} \beta_{v} {\left\|u_{v}\right\|}_{N}^{*} a_{v}^{r} e_{v} \right|{\left\|x\right\|_{N}} \\
	& \leq &  \sup_{\left| \beta_{v} \right| \leq 1 } \left| \sum_{v} \beta_{v} \left( C \max_{1 \leq k \leq k_{0}} a_{v}^{k} \left\| u_{v}\right\|_{N(k)}^{*} \right) e_{v} \right|{\left\|x\right\|_{N}} \\
  & \leq &  C \sum_{k=1}^{k_{0}} \sup_{\left| \beta_{v} \right| \leq 1} \left| \sum_{v} \beta_{v} a_{v}^{k} \left(\sup_{{\left\|x\right\|}_{N(k)}\leq1} \left| u_{v}(x) \right|\right) e_{v} \right|{\left\|x\right\|_{N}} \\
	& \leq & C \sum_{k=1}^{k_{0}} \sup_{{\left\|x\right\|}_{N(k)}\leq1} \left(\sup_{\left| \beta_{v} \right| \leq 1} \left| \sum_{v} \beta_{v} a_{v}^{k} \theta_{v} u_{v}(x) e_{v}\right|\right){\left\|x\right\|_{N}} \\
	& \leq & C \sum_{k=1}^{k_{0}} \sup_{{\left\|x\right\|}_{N(k)}\leq1} \left(\sup_{\left| \alpha_{v} \right| \leq 1} \left| \sum_{v} \alpha_{v} a_{v}^{k} u_{v}(x) e_{v}\right|\right){\left\|x\right\|_{N}} \\
	& \leq & \left(C \sum_{k=1}^{k_{0}} M(k)\right) {\left\|x\right\|_{N}}
\end{eqnarray*}

    Hence $T$ is bounded.
\end{proof}

\section{BOUNDED FACTORIZATION PROPERTY FOR $\ell$-KOTHE SPACES}
    We need the following theorem \cite[Theorem 2.2]{Ter03}.
\begin{theorem}\label{thm3}
For Fr{\'e}chet spaces $E, F$ and $G$ we have $(E,F,G) \in \mathcal{BF}$ if and only if for every sequence $N(k)$ there is $N \in \mathbb{N}$ such that for each $r \in \mathbb{N}$ we have $k_{0} = k_{0}(r) \in \mathbb{N}$ and $C = C(r)> 0$ so that the following inequality
$$
\left\|T\right\|_{r,N} \leq C \max_{1 \leq k \leq k_{0}} \left\{\left\|R\right\|_{k,N(k)}\right\} \max_{1 \leq k \leq k_{0}}\left\{\left\|S\right\|_{k,N(k)}\right\}
$$
is satisfied for every $R \in L(F,G)$, $S \in L(E,F)$ where $T = RS$.
\end{theorem}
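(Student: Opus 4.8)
The plan is to establish the two implications separately, relying on two elementary properties of the operator seminorms $\|\cdot\|_{p,q}$ that come straight from the definition: monotonicity in the source index, $\|U\|_{p,q'} \le \|U\|_{p,q}$ whenever $q' \ge q$, and submultiplicativity along a factorization, $\|RS\|_{r,m} \le \|R\|_{r,s}\,\|S\|_{s,m}$ for any intermediate index $s$. The first makes the sufficiency immediate, while the second governs the product of norms on the right-hand side and is the main tool for controlling compositions in the converse.

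For the sufficiency, suppose (ii) holds and let $T = RS$ with $S \in L(E,F)$ and $R \in L(F,G)$ continuous. By continuity there are index functions $\rho$ and $\sigma$ with $\|R\|_{k,\rho(k)} < \infty$ and $\|S\|_{k,\sigma(k)} < \infty$ for every $k$; setting $N(k) := \max\{\rho(k),\sigma(k)\}$, monotonicity makes both $\|R\|_{k,N(k)}$ and $\|S\|_{k,N(k)}$ finite. Inserting this sequence into (ii) yields a single $N$ such that, for each $r$, there are $k_{0}$ and $C$ with $\|T\|_{r,N} \le C \max_{1 \le k \le k_{0}}\|R\|_{k,N(k)} \max_{1 \le k \le k_{0}}\|S\|_{k,N(k)} < \infty$, the right-hand side being a finite maximum of finite quantities. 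Since $N$ does not depend on $r$, this is exactly the statement that $T$ is bounded, so $(E,F,G) \in \mathcal{BF}$.

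For the necessity I would argue by contraposition and manufacture a continuous but unbounded factoring operator once (ii) fails. The negation produces one fixed sequence $N(\cdot)$ such that for every $N$ there is an index $r=r(N)$ admitting no admissible $k_{0}, C$; choosing $k_{0} = N$ and letting the constant grow, one extracts for each $j$ a factorization $T_{j} = R_{j} S_{j}$ with
$$\|T_{j}\|_{r_{j},j} > 2^{j} \max_{1 \le k \le j}\|R_{j}\|_{k,N(k)} \max_{1 \le k \le j}\|S_{j}\|_{k,N(k)}.$$
The aim is then to glue these pieces, after rescaling $R_{j} \mapsto \lambda_{j} R_{j}$ and $S_{j} \mapsto \mu_{j} S_{j}$, into single operators $R = \sum_{j} \lambda_{j} R_{j}$ and $S = \sum_{j} \mu_{j} S_{j}$ that remain continuous when tested against the fixed sequence $N(\cdot)$, while the diagonal contributions $\lambda_{j}\mu_{j} T_{j}$ force $\|RS\|_{r_{j},N}$ to diverge for every candidate $N$ (note $\|T_{j}\|_{r_{j},N} \ge \|T_{j}\|_{r_{j},j}$ for $N \le j$ by monotonicity), so that $T = RS$ is continuous but not bounded, contradicting $\mathcal{BF}$.

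The delicate point, and the step I expect to be the main obstacle, is that $RS = \sum_{i,j}\lambda_{i}\mu_{j} R_{i} S_{j}$ carries off-diagonal cross terms $R_{i} S_{j}$ ($i \ne j$) about which the violation inequalities say nothing and which could wreck both the continuity of $T$ and the divergence supplied by the diagonal. I would handle this by first reducing to rank-one building blocks $S_{j} = y_{j} \otimes f_{j}$ and $R_{j} = g_{j} \otimes z_{j}$ --- natural because the product form of the right-hand side of (ii) mirrors the factorizations $\|S_{j}\|_{k,N(k)} = \|y_{j}\|_{N(k)}^{*}\|f_{j}\|_{k}$ and $\|R_{j}\|_{k,N(k)} = \|g_{j}\|_{N(k)}^{*}\|z_{j}\|_{k}$ of rank-one operator norms --- whereupon $R_{i} S_{j} = g_{i}(f_{j})\,(y_{j} \otimes z_{i})$, and then arranging the $f_{j} \in F$ and $g_{i} \in F'$ to be approximately biorthogonal, $g_{i}(f_{j}) \approx \delta_{ij}$, by a Hahn--Banach/perturbation construction. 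Once the cross terms are made summably small and the scalars $\lambda_{j},\mu_{j}$ are chosen so that $\sum_{j} \lambda_{j} \|R_{j}\|_{k,N(k)}$ and $\sum_{j} \mu_{j} \|S_{j}\|_{k,N(k)}$ converge for each $k$ while $\lambda_{j}\mu_{j}\|T_{j}\|_{r_{j},N}$ stays unbounded, the construction closes; securing this biorthogonality together with the simultaneous control of convergence and divergence is the technical heart of the argument.
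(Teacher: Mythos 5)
First, a point of orientation: the paper does not prove this statement at all --- it is imported verbatim as \cite[Theorem 2.2]{Ter03}, so the comparison must be with the proof in that reference. Your sufficiency half is correct and is essentially the standard argument: build $N(k)=\max\{\rho(k),\sigma(k)\}$ from the continuity indices of $R$ and $S$, apply (ii) to that sequence, and read off boundedness of $T$; the monotonicity $\|U\|_{p,q'}\le\|U\|_{p,q}$ for $q'\ge q$ is used correctly. The necessity half, however, has a decisive gap at the step you call a ``reduction to rank-one building blocks.'' The negation of (ii) hands you \emph{arbitrary} violating operators $R_j\in \mathcal{L}(F,G)$, $S_j\in \mathcal{L}(E,F)$; you have no right to replace them by rank-one ones. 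The rank-one version of (ii) is formally \emph{weaker} than (ii) (it is the same inequality tested on fewer operators), so ``(ii) fails'' does not imply ``rank-one (ii) fails,'' and your construction, even if completed, would only show that $\mathcal{BF}$ implies the rank-one inequality --- not inequality (ii) itself. Passing from rank-one testing to all operators is possible precisely when the middle and target spaces are K\"othe spaces, because one can decompose $S$ and $R$ along the basis; that is exactly what Theorem \ref{thm4} of this paper does. For general Fr\'echet spaces $F$ and $G$ there is no basis (indeed $F$ may lack the approximation property), so this reduction is not available. The two further steps you flag as ``the technical heart'' are also genuinely broken: the vectors $f_j$ and functionals $g_i$ come attached to the violating operators, so you cannot arrange $g_i(f_j)\approx\delta_{ij}$ by Hahn--Banach (they could all be proportional to a single fixed vector and functional); and unboundedness of $T$ requires, for every $N$, some $r$ with $\|T\|_{r,N}=+\infty$, which your finite lower bounds $2^j(\cdots)(\cdots)$ do not yield unless the products on the right stay bounded below while $\sum_j\lambda_j R_j$ and $\sum_j\mu_j S_j$ simultaneously converge --- requirements that pull in opposite directions.

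The actual proof in \cite{Ter03} avoids any counterexample construction: it is a nonconstructive, uniform-boundedness-type argument in the spirit of Vogt's proof \cite{Vog83} of the linear characterization \eqref{eqn}. One considers the Fr\'echet spaces of operators with finite mixed norms $\|S\|_{k,N(k)}$ and $\|R\|_{k,N(k)}$, the increasing scale of Fr\'echet spaces of operators bounded with respect to a fixed index $N$, and the bilinear composition map $(R,S)\mapsto RS$; the hypothesis $\mathcal{BF}$ says this map takes values in the union of that scale, and a Baire-category/Grothendieck-factorization argument produces a single $N$ together with the joint-continuity estimate, which is precisely (ii). This sidesteps all three obstacles above at once, which is why a constructive contrapositive of the kind you sketch should only be expected to work under structural (basis) assumptions on $F$ and $G$, i.e.\ for the K\"othe-space theorems of this paper rather than for Theorem \ref{thm3} in its stated generality.
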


     The next result is obtained by following the lines of \cite[Corollary 3.1]{Ter03}.
\begin{theorem}\label{thm4}
Let $E$ be a Fr{\'e}chet space, $\lambda^{\ell}(B),\lambda^{\tilde{\ell}}(C)$ be ${\ell}$-K\"{o}the spaces and $\lambda^{\ell}(B)$ be nuclear. Then $(E,\lambda^{\ell}(B),\lambda^{\tilde{\ell}}(C)) \in \mathcal{BF}$ if and only if for every sequence $N(k)$ there is $N \in \mathbb{N}$ such that for each $r \in \mathbb{N}$ we have $k_{0} \in \mathbb{N}$ and $C > 0$ with
\begin{equation}\label{eqn1}
c_{j}^{r} \left\|u\right\|_{N}^{*} \leq C \max_{1 \leq k \leq k_{0}} \left\{\left\|u\right\|_{N(k)}^{*} b_{i}^{k}\right\} \max_{1 \leq k \leq k_{0}} \left\{\frac{c_{j}^{k}}{b_{i}^{N(k)}}\right\}
\end{equation}
for all $i \in \mathbb{N}, j \in \mathbb{N}$ and $u \in E{'}.$
\end{theorem}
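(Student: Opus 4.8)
The plan is to apply the abstract factorization criterion of Theorem~\ref{thm3} to the triple with $F=\lambda^{\ell}(B)$ and $G=\lambda^{\tilde{\ell}}(C)$, and to translate the operator inequality there into the scalar inequality \eqref{eqn1}: rank-one operators give necessity, while a nuclear summation argument gives sufficiency. For the necessity direction I would mimic the proofs of Theorems~\ref{thm1} and \ref{thm2}. Fix $i,j\in\mathbb{N}$ and $u\in E'$, and test the inequality of Theorem~\ref{thm3} on the rank-one factorization $T=RS$ with $S=u\otimes e_i\in\mathcal{L}(E,\lambda^{\ell}(B))$ and $R=e_i'\otimes e_j\in\mathcal{L}(\lambda^{\ell}(B),\lambda^{\tilde{\ell}}(C))$, so that $T=u\otimes e_j$. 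Since $\|S\|_{k,N(k)}=\|u\|_{N(k)}^{*}b_i^{k}$, $\|R\|_{k,N(k)}=c_j^{k}/b_i^{N(k)}$ and $\|T\|_{r,N}=c_j^{r}\|u\|_N^{*}$ (using $\|e_i'\|_{N(k)}^{*}=1/b_i^{N(k)}$, which follows from the monotonicity of the norm), the inequality of Theorem~\ref{thm3} becomes exactly \eqref{eqn1}; letting $i,j,u$ vary yields it in full.

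For sufficiency I would write $S$ and $R$ in coordinates: $Sx=\sum_i u_i(x)e_i$ with $u_i=e_i'\circ S\in E'$, and $Re_i=\sum_j r_{ji}e_j$, so that $Tx=\sum_i u_i(x)Re_i$. The triangle inequality in the monotone norm of $\lambda^{\tilde{\ell}}(C)$ gives $\|T\|_{r,N}\le\sum_i\|u_i\|_N^{*}\|Re_i\|_r$. Applying \eqref{eqn1} to each $u=u_i$ and using monotonicity to pass the resulting $\max_{k\le k_0}$ inside the $\tilde{\ell}$-norm (bounding the maximum by the sum over $k$) should yield, for each $i$,
\[
\|u_i\|_N^{*}\|Re_i\|_r\le C\Big(\max_{k\le k_0}\|u_i\|_{M(k)}^{*}b_i^{k}\Big)\sum_{k=1}^{k_0}\frac{\|Re_i\|_k}{b_i^{M(k)}},
\]
where $M(\cdot)$ is the sequence I feed into \eqref{eqn1}. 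The two elementary estimates $\|u_i\|_{M(k)}^{*}b_i^{k}\le\|S\|_{k,M(k)}$ and $\|Re_i\|_k/b_i^{N(k)}\le\|R\|_{k,N(k)}$, both immediate from monotonicity by testing on the single coordinate $e_i$, then reduce everything to controlling $\sum_i\|Re_i\|_k/b_i^{M(k)}$.

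The main obstacle is precisely this summation over $i$: bounding both factors by their operator norms makes the powers of $b_i$ cancel and leaves a divergent constant series, so the naive estimate fails. This is where the nuclearity of $\lambda^{\ell}(B)$ enters, and the idea is to feed \eqref{eqn1} not the given sequence $N(\cdot)$ but an enlarged one. By the Grothendieck--Pietsch criterion, for each $p$ I choose $\phi(p)>p$ with $\sum_i b_i^{p}/b_i^{\phi(p)}<\infty$, and set $M(k)=\phi(N(k))\ge N(k)$. Writing $\|Re_i\|_k/b_i^{M(k)}=(\|Re_i\|_k/b_i^{N(k)})(b_i^{N(k)}/b_i^{M(k)})$ and bounding the first factor by $\|R\|_{k,N(k)}$ leaves the convergent tail $\Gamma_k=\sum_i b_i^{N(k)}/b_i^{M(k)}<\infty$. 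Since $\max_{k\le k_0}\|u_i\|_{M(k)}^{*}b_i^{k}\le\max_{k\le k_0}\|S\|_{k,M(k)}\le\max_{k\le k_0}\|S\|_{k,N(k)}$ (as $M(k)\ge N(k)$ gives a smaller operator seminorm), summing over $i$ and then over $k\le k_0$ produces
\[
\|T\|_{r,N}\le C\Big(\sum_{k=1}^{k_0}\Gamma_k\Big)\max_{k\le k_0}\|S\|_{k,N(k)}\max_{k\le k_0}\|R\|_{k,N(k)},
\]
which is exactly the inequality of Theorem~\ref{thm3}; hence $(E,\lambda^{\ell}(B),\lambda^{\tilde{\ell}}(C))\in\mathcal{BF}$. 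I expect the only delicate points to be the bookkeeping of the index $N$ produced by \eqref{eqn1} for the enlarged sequence $M$, and checking that the monotone, unconditional-basis structure of both $\ell$ and $\tilde{\ell}$ legitimizes passing maxima and scalars through the norms.
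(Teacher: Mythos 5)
Your proposal is correct, and although its skeleton coincides with the paper's --- necessity via the rank-one factorization $S=u\otimes e_i$, $R=e_i'\otimes e_j$ plugged into Theorem~\ref{thm3}, sufficiency via the coordinate expansion $Sx=\sum_i u_i(x)e_i$ and a nuclearity-enlarged index sequence --- your handling of the key sufficiency estimate is genuinely different and, as it happens, more watertight. The paper invokes $\lambda^{\ell}(B)=\lambda(B)$ and argues coordinatewise in both the middle index $i$ and the range index $j$, which commits it to three steps that need repair: the identity $\|S\|_{k,N(k)}=\sum_i\|u_i\|^*_{N(k)}b_i^k$ (only ``$\leq$'' is automatic, and the paper uses the reverse direction); the passage from the product $\max_{k}\{\|u_i\|^*_{S(k)}b_i^k\}\max_{k}\{c_j^k/b_i^{S(k)}\}$ to the single sum $\sum_{k}\|u_i\|^*_{S(k)}b_i^k\,c_j^k/b_i^{S(k)}$, which fails for general positive arrays (take the first array concentrated at $k=1$ and the second at $k=2$); and the bound $\sum_j|r_{ji}|c_j^k/b_i^{S(k)}\leq\theta(k)\sup_j|r_{ji}|c_j^k/b_i^{N(k)}$, whose factor $\theta(k)$ is a sum over the middle index $i$ while the left-hand side sums over the range index $j$ --- and since $\lambda^{\tilde{\ell}}(C)$ is not assumed nuclear, $\sum_j|r_{ji}|c_j^k$ need not even converge. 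Your arrangement avoids all of this: you never expand $Re_i$ over $j$ but keep the norms $\|Re_i\|_k$ intact; you bound the $S$-side by the per-index estimate $\|u_i\|^*_{M(k)}b_i^k\leq\|S\|_{k,M(k)}\leq\|S\|_{k,N(k)}$ rather than by a sum over $i$; you convert only the $j$-dependent maximum into a sum over $k\leq k_0$ (keeping the $i$-dependent one as a maximum, which is the legitimate direction of that step); and you park the nuclearity weight $b_i^{N(k)}/b_i^{M(k)}$ on the $R$-side, where $\|Re_i\|_k\leq\|R\|_{k,N(k)}b_i^{N(k)}$ makes it sum to $\Gamma_k<\infty$. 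The resulting constant $C\sum_{k=1}^{k_0}\Gamma_k$ depends only on $r$, exactly as Theorem~\ref{thm3} permits, so your proof is complete; what it buys is an argument that uses only the monotonicity of the $\ell$- and $\tilde{\ell}$-norms together with the Grothendieck--Pietsch consequence of nuclearity, whereas the paper's shorter coordinatewise computation would, as written, need to be rearranged essentially along your lines to be fully rigorous.
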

\begin{proof}
Let $S = u \otimes e_{i}$ and $R = e_{i}{'} \otimes e_{j}$ where $u \in E{'}$. Then $RS: E \longrightarrow G $ is the operator of rank one which sends each $x \in E$ to $u(x)e_{j}$. If we apply Theorem \ref{thm3} we obtain the result.

     For sufficiency, we take $S \in L(E,\lambda^{\ell}(B))$, $R \in L(\lambda^{\ell}(B), \lambda^{\tilde{\ell}}(C))$ and $T=RS$. Since $\lambda^{\ell}(B)$ is nuclear, $\exists S(k)$ such that $S(k) > N(k)$ and
$$
\sum_{i} \frac{b_{i}^{N(k)}}{b_{i}^{S(k)}} = \theta(k) < \infty, \quad k \in \mathbb{N}.
$$
We can write $\displaystyle Sx = \sum_{i} u_{i}(x) e_{i}$ where $u_{i} = e_{i}{'} \circ S \in E{'}$ and $\displaystyle Re_{i} = \sum_{j} r_{ji}e_{j}.$ Therefore
$$ 
Tx = \sum_{i} \sum_{j} u_{i}(x) r_{ji} e_{j}
$$
 
     For this $S(k)$ we choose $N \in \mathbb{N}$ such that for each $r \in \mathbb{N}$ we obtain $k \in \mathbb{N}$ and $C > 0$ with
\begin{equation}\label{eqn2}
c_{j}^{r} \left\|u_{i}\right\|_{N}^{*} \leq C \max_{1 \leq k \leq k_{0}} \left\{\left\|u_{i}\right\|_{S(k)}^{*} b_{i}^{k}\right\} \max_{1 \leq k \leq k_{0}} \left\{\frac{c_{j}^{k}}{b_{i}^{S(k)}}\right\}
\end{equation}
for all $i \in \mathbb{N}, j \in \mathbb{N},u_{i} \in E{'}.$

    Since all types nuclear K\"{o}the spaces determined by one and the same matrix $B$ coincide \cite[Corollary 2, p.22]{Dra83}$, \lambda(B) = \lambda^{\ell}(B)$ and we have
$$
\left\|S\right\|_{k,N(k)} = \sup_{\left\|x\right\|_{N(k)} \leq 1} \left\|Sx\right\|_{k} = \sup_{\left\|x\right\|_{N(k)} \leq 1} \sum_{i} \left|u_{i}(x)\right| b_{i}^{k} = \sum_{i} \left\|u_{i}\right\|_{N(k)}^{*} b_{i}^{k}
$$
$$
\left\|R\right\|_{k,N(k)} = \sup_{i} \frac{\left\|Re_{i}\right\|_{k}}{\left\|e_{i}\right\|_{N(k)}} = \sup_{i} \sup_{\left|\beta_{j}\right|\leq1} \left|\sum_{j} \frac{r_{ji} \beta_{j} c_{j}^k e_{j}}{b_{i}^{N(k)}} \right|
$$
Therefore, we have
$$
\sum_{i} \left\|u_{i}\right\|_{S(k)}^{*} b_{i}^{k} \leq \sum_{i} \left\|u_{i}\right\|_{N(k)}^{*} b_{i}^{k} = \left\|S\right\|_{k,N(k)}
$$
and
$$
\sum_{j} \frac{\left|r_{ji}\right| c_{j}^{k}}{b_{i}^{S(k)}} \leq \theta(k) \sup_{j} \frac{\left|r_{ji}\right| c_{j}^{k}}{b_{i}^{N(k)}} \leq \theta(k) \left\|R\right\|_{k,N(k)} 
$$
(see \cite[proof of Corollary 2, p.22]{Dra83})
Hence, by \eqref{eqn2} we obtain that
\begin{align*}\label{eqn3}
\left\|Tx\right\|_{r} & \leq \sum_{i} \sum_{j} \frac{\left|u_{i}(x)\right|}{\left\|x\right\|_{N}} \left|r_{ji}\right| c_{j}^{r} \left\|x\right\|_{N}\\ 
                      & \leq \sum_{i} \sum_{j} \left\|u_{i}\right\|_{N}^{*} \left|r_{ji}\right| c_{j}^{r} \left\|x\right\|_{N}\\
											& \leq \sum_{i} \sum_{j} \left|r_{ji}\right| \left\{ C \max_{1 \leq k \leq k_{0}} \left\{ \left\|u_{i}\right\|_{S(k)}^{*} b_{i}^{k} \right\} \max_{1 \leq k \leq k_{0}} \left\{ \frac{c_{j}^{k}}{b_{i}^{S(k)}} \right\} \right\} \left\|x\right\|_{N}\\
											& \leq C \sum_{k=1}^{k_{0}} \sum_{i} \left\|u_{i}\right\|_{S(k)}^{*} b_{i}^{k} \sum_{j} \frac{\left|r_{ji}\right|c_{j}^{k}}{b_{i}^{S(k)}} \left\|x\right\|_{N}\\
											& \leq C \sum_{k=1}^{k_{0}} \sum_{i} \left\|u_{i}\right\|_{S(k)}^{*} b_{i}^{k} \theta(k) \left\|R\right\|_{k,N(k)} \left\|x\right\|_{N}\\
											& \leq \left( C \sum_{k=1}^{k_{0}} \theta(k) \left\|S\right\|_{k,N(k)} \left\|R\right\|_{k,N(k)} \right) \left\|x\right\|_{N} 
\end{align*}
Therefore, $T$ is bounded.
\end{proof}

      Recall that projective tensor product of two $l_{1}$-K\"{o}the spaces $\lambda(A)$ and $\lambda(B)$ is isomorphic to $\lambda(D)$ where $d_{vz}^{k} = a_{v}^{k} b_{z}^{k}.$
			
			Theorem \ref{thm4} enables us to get:
\begin{theorem}
Suppose $(E, \lambda^{\ell_1}(A)) \in \mathcal{B}$ and $(\lambda^{\ell_2}(B), \lambda^{\ell_3}(C)) \in \mathcal{B}$ where $E$ is a Fr{\'e}chet space, $\lambda^{\ell_3}(C)$ is an $\ell$-K\"{o}the space, $\lambda^{\ell_1}(A)$ and $\lambda^{\ell_2}(B)$ are nuclear $\ell$-K\"{o}the spaces. Then $(E,\lambda^{\ell_1}(A) \hat{\otimes}_{\pi} \lambda^{\ell_2}(B), \lambda^{\ell_3}(C)) \in \mathcal{BF}$.
\end{theorem}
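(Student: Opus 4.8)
The plan is to realize the middle space as a nuclear $l_1$-K\"othe space and then invoke Theorem \ref{thm4}. Since $\lambda^{\ell_1}(A)$ and $\lambda^{\ell_2}(B)$ are nuclear, the cited coincidence of nuclear K\"othe spaces gives $\lambda^{\ell_1}(A)=\lambda(A)$ and $\lambda^{\ell_2}(B)=\lambda(B)$, so their projective tensor product is $\lambda(D)$ with $d_{(v,z)}^{k}=a_v^k b_z^k$; as a tensor product of nuclear spaces it is again nuclear, hence $\lambda(D)=\lambda^{l_1}(D)$ is an admissible middle space. Applying Theorem \ref{thm4} with middle space $\lambda(D)$ and range $\lambda^{\ell_3}(C)$, the whole problem reduces to checking \eqref{eqn1} for the matrix $D$: for every $N(k)$ there is $N$ such that for each $r$ one finds $k_0,C$ with
$$c_j^r \|u\|_N^* \le C \max_{1\le k\le k_0}\{\|u\|_{N(k)}^* a_v^k b_z^k\}\,\max_{1\le k\le k_0}\Big\{\frac{c_j^k}{a_v^{N(k)} b_z^{N(k)}}\Big\}$$
for all $v,z,j$ and $u\in E'$.

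Next I would record the two hypotheses in the shapes given by Theorems \ref{thm2} and \ref{thm1} (using $\lambda^{\ell_2}(B)=\lambda(B)$). From $(E,\lambda^{\ell_1}(A))\in\mathcal B$: for every $P(k)$ there is $P^{*}$ so that for each $s$, $a_v^s\|u\|_{P^{*}}^{*}\le C_1\max_{k}a_v^k\|u\|_{P(k)}^{*}$. From $(\lambda(B),\lambda^{\ell_3}(C))\in\mathcal B$: for every $Q(k)$ there is $Q^{*}$ so that for each $t$, $c_j^t/b_z^{Q^{*}}\le C_2\max_k c_j^k/b_z^{Q(k)}$.

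For the combination I would feed the given $N(\cdot)$ into the second condition to obtain the index $N_2^{*}$. Given $r$, applying it with $t=r$ yields some $k_2^{*}\le q_0$ with $c_j^r\le C_2\,b_z^{N_2^{*}}c_j^{k_2^{*}}/b_z^{N(k_2^{*})}$; writing $c_j^{k_2^{*}}/b_z^{N(k_2^{*})}=a_v^{N(k_2^{*})}\cdot c_j^{k_2^{*}}/(a_v^{N(k_2^{*})}b_z^{N(k_2^{*})})$ and pulling the second maximum out leaves the estimate $c_j^r\|u\|_N^{*}\le C_2\,b_z^{N_2^{*}}a_v^{N(k_2^{*})}\|u\|_N^{*}\cdot[\text{second max}]$. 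Everything then hinges on bounding $b_z^{N_2^{*}}a_v^{N(k_2^{*})}\|u\|_N^{*}$ by the first maximum. Using the first condition at the finitely many values $s=N(k_2^{*})$, $k_2^{*}\le q_0$ (with constants taken uniformly over this finite range), trades $a_v^{N(k_2^{*})}\|u\|_N^{*}$ for a maximum of $a_v^k\|u\|_{N(k)}^{*}$; it remains to swallow the stray $b_z^{N_2^{*}}$.

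That last absorption is where I expect the real difficulty. The factor $b_z^{N_2^{*}}$ produced by the $(\lambda(B),\lambda(C))$-condition is constant in the running index, while the first maximum couples $a_v^k$ and $b_z^k$ at the \emph{same} index $k$, and $b_z^{N_2^{*}}$ cannot in general be dominated by $b_z^{1}$, so a naive comparison fails. The device that fixes this is to feed the first condition not $N(\cdot)$ but the \emph{shifted} sequence $P(k):=N(k+N_2^{*})$, and to set $N:=P^{*}$. After reindexing, the resulting maximum runs only over indices $k>N_2^{*}$, and there the monotonicity of the K\"othe matrices gives $a_v^{\,k-N_2^{*}}\le a_v^{k}$ and, crucially, $b_z^{N_2^{*}}\le b_z^{k}$, so $b_z^{N_2^{*}}$ is absorbed into $b_z^{k}$ and one lands inside $\max_k \|u\|_{N(k)}^{*} a_v^k b_z^k$. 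Taking $k_0$ larger than the two index ranges generated above and $C=C_1C_2$ then yields \eqref{eqn1} for $D$, and Theorem \ref{thm4} completes the argument. The only bookkeeping I would watch is that $N$ depends on $N(\cdot)$ but not on $r$ (true because $N_2^{*}$, hence the shift, is fixed once $N(\cdot)$ is chosen) and the uniformity of constants over $k_2^{*}\le q_0$.
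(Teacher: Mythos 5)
Your proposal is correct and takes essentially the same route as the paper: realize the middle space as the nuclear $l_1$-K\"othe space $\lambda(D)$ with $d_{vz}^{k}=a_v^{k}b_z^{k}$, reduce via Theorem \ref{thm4} to condition \eqref{eqn1} for the matrix $D$, and splice together the two hypotheses in the forms given by Theorems \ref{thm1} and \ref{thm2}, watching exactly the uniformity points you flag. The single point of divergence is the absorption device: where you shift the sequence to $P(k)=N(k+N_2^{*})$, the paper instead applies Theorem \ref{thm2} to the sequence $S(k)$ defined by $S(k)=N(n)$ for $1\le k\le n$ and $S(k)>N(k)$ for $k>n$ (its $n$ is your $N_2^{*}$); both tricks force the maximum produced by the $(E,\lambda^{\ell_1}(A))$ hypothesis onto indices $k\ge n$, where monotonicity of the K\"othe matrix gives $b_i^{n}\le b_i^{k}$ and the stray factor is swallowed, so the two arguments agree in substance.
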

\begin{proof} Given $N(k)$ which is assumed to be non-decreasing.  Since $\lambda^{\ell_2}(B)$ is nuclear and $(\lambda^{\ell_2}(B),\lambda^{\ell_3}(C))\in\mathcal{B}$, we obtain that $\lambda^{\ell_2}(B) = \lambda(B)$ and by Theorem \ref{thm1}, $\exists n \in \mathbb{N}$ such that $\forall r \in \mathbb{N}$ we have $k_{0} = k_{0}(r) \in \mathbb{N}$ and $C_{1} = C_{1}(r) > 0$ with
$$
\frac{c_{j}^{r}}{b_{i}^{n}} \leq C_{1} \max_{1 \leq k \leq k_{0}} \frac{c_{j}^{k}}{b_{i}^{N(k)}}
$$ 
for all $i \in \mathbb{N}$, $j \in \mathbb{N}.$

       We then determine $S(k)$ such that $S(k) = N(n)$ if $1 \leq k \leq n$ and $S(k) > N(k)$ if $n+1 \leq k \leq s_{0}$. Since $(E, \lambda^{\ell_1}(A)) \in \mathcal{B}$ by Theorem \ref{thm2}, for this $S(k)$, we find $m \in \mathbb{N}$ such that $\forall q \in \mathbb{N}$ we have $s_{0} = s_{0}(q)$ and $C_{2} = C_{2}(q)$
\begin{align*}
			a_{v}^{q} \left\|u\right\|_{m}^{*} & \leq C_{2} \max_{1 \leq k \leq s_{0}} a_{v}^{k} \left\|u\right\|_{S(k)}^{*}\\
			                                   & \leq C_{2} \max_{n \leq k \leq s_{0}} a_{v}^{k} \left\|u\right\|_{N(k)}^{*}
\end{align*}

      Therefore, for this $N(k)$ we have $\tilde{s_{0}} = s_{0}(N(k))$ and $\tilde{C_{2}} = C_{2}(N(k))$ with
\begin{align*}
c_{j}^{r} \left\|u\right\|_{m}^{*} & = \frac{c_{j}^{r}}{b_{i}^{n}} b_{i}^{n} \left\|u\right\|_{m}^{*} \\
                                   & \leq \left\{C_{1} \max_{1 \leq k \leq k_{0}} \frac{c_{j}^{k}}{b_{i}^{N(k)}}\right\} b_{i}^{n} \left\|u\right\|_{m}^{*} \\
																	 & \leq \left\{C_{1} \max_{1 \leq k \leq k_{0}} \frac{c_{j}^{k}}{a_{v}^{N(k)} b_{i}^{N(k)}}\right\} a_{v}^{N(k)} b_{i}^{n} \left\|u\right\|_{m}^{*} \\
																	 & \leq \left\{C_{1} \max_{1 \leq k \leq k_{0}} \frac{c_{j}^{k}}{a_{v}^{N(k)} b_{i}^{N(k)}}\right\} \left\{ \tilde{C_{2}} \max_{n \leq k \leq \tilde{s_{0}}} a_{v}^{k} \left\|u\right\|_{N(k)}^{*} b_{i}^{n} \right\}\\
																	 & \leq \left\{C_{1} \max_{1 \leq k \leq k_{0}} \frac{c_{j}^{k}}{a_{v}^{N(k)} b_{i}^{N(k)}}\right\} \left\{ \tilde{C_{2}} \max_{1 \leq k \leq \tilde{s_{0}}} a_{v}^{k} b_{i}^{k} \left\|u\right\|_{N(k)}^{*} \right\}
\end{align*}

      Let $s = \max\left\{k_{0},\tilde{s_{0}}\right\}$ and $C = C(r) = C_{1}\tilde{C_{2}}$. We have proved that $\exists m \in \mathbb{N}$ such that $\forall r \in  \mathbb{N}$ we have $s \in \mathbb{N}$ and $C > 0$ with 
$$
c_{j}^{r} \left\|u\right\|_{m}^{*} \leq C \max_{1 \leq k \leq s} \left\{ \frac{c_{j}^{k}}{a_{v}^{N(k)}b_{i}^{N(k)}} \right\} \max_{1 \leq k \leq s} \left\{ \left\|u\right\|_{N(k)}^{*} a_{v}^{k} b_{i}^{k} \right\}
$$
for all $j \in \mathbb{N}, v \in \mathbb{N}, i \in \mathbb{N}$ and $u \in E{'}.$

If $\lambda(A)^{\ell_1}$ and $\lambda^{\ell_2}(B)$ are nuclear $\ell$-K\"{o}the spaces, then 
$$\lambda^{\ell_1}(A) \hat{\otimes}_{\pi} \lambda^{\ell_2}(B) \cong \lambda(A) \hat{\otimes}_{\pi} \lambda(B) \cong \lambda(D)$$ is nuclear where $d_{vi}^{k} = a_{v}^{k} b_{i}^{k}$ \cite[Corollary 2, p.22]{Dra83}.

      By Theorem \ref{thm4} we obtain that $(E,\lambda^{\ell_1}(A) \hat{\otimes}_{\pi} \lambda^{\ell_2}(B), \lambda^{\ell_3}(C)) \in \mathcal{BF}$.
\end{proof}

\end{document}